\documentclass[11pt]{amsart}
\usepackage{amsmath,amssymb, mathrsfs}
\usepackage{xypic}
\usepackage[colorlinks=true,linkcolor=black,citecolor=black,urlcolor=black]{hyperref}

\usepackage[french,english]{babel}

\oddsidemargin=0in
\evensidemargin=0in
\textwidth=6.5in
\textheight=8.5in


\usepackage[french,english]{babel}
\usepackage[T1]{fontenc}

 \usepackage{mathrsfs}
 \usepackage{color}
 \usepackage{url}

 \usepackage[foot]{amsaddr}


\newcommand{\rem}[1]{} 



\newcommand{\N}{\mathbb{N}}



\newcommand{\bbA}{{\mathbb{A}}}

\newcommand{\calF}{{\mathcal{F}}}

\newcommand{\frakp}{{\mathfrak{p}}}
\newcommand{\frakq}{{\mathfrak{q}}}


\newcommand{\suchthat}{\,:\,}

\newcommand{\units}[1]{{#1}^\times}


 %

 %

 %

 %

 %
 %
 %
 %
 %

 %

 %

 %
 %
\DeclareMathOperator{\Max}{Max} %
\newcommand{\op}{\mathrm{op}}

\DeclareMathOperator{\Spec}{Spec}


\newtheorem{thm}{Theorem}[section]
\newtheorem*{thm*}{Theorem}
\newtheorem{lem}[thm]{Lemma}
\newtheorem{prp}[thm]{Proposition}
\newtheorem{cor}[thm]{Corollary}

\newtheoremstyle{roman} 
    {8.0pt plus 2.0pt minus 4.0pt}                    
    {8.0pt plus 2.0pt minus 4.0pt}                    
    {\normalfont}                
    {}                           
    {\bfseries}                  
    {.}                          
    {5pt plus 1pt minus 1pt}     
    {}  

\theoremstyle{roman}

\newtheorem{remark}[thm]{Remark}

\theoremstyle{plain}



 %

\newcommand{\nMat}[2]{\mathrm{M}_{#2}(#1)}
\newcommand{\Mat}{\operatorname{M}}


\numberwithin{equation}{section}


\author{Uriya A. First}

\address
{Department of Mathematics \\
University of British Columbia \\
Vancouver
\\
CANADA}

\email{uriya.first@gmail.com}

\author{Zinovy Reichstein}


\email{reichst@math.ubc.ca}

\thanks {The second author has been partially
supported by an NSERC Discovery Grant.}

\subjclass[2010]{17A01, 13C15, 13E15}



\begin{document}

\title[On the number of generators of an algebra]{On the number
of generators of an algebra}

\begin{abstract}
A classical theorem of Forster asserts that
a finite module $M$ of rank $\leq n$ over a Noetherian ring
of Krull dimension $d$
can be generated by $n + d$ elements. We prove a generalization
of this result, with ``module'' replaced by ``algebra''.
Here we allow arbitrary finite algebras,
not necessarily unital, commutative or associative. Forster's theorem
can be recovered as a special case by viewing a module as an algebra
where the product of any two elements is $0$.
\end{abstract}


\maketitle

\rem{
\maketitle

\vspace{-.5in}

\begin{otherlanguage}{french}
\begin{abstract}
Un th\'eor\`eme classique de Forster affirme que tout module $M$
de type fini et de {rang~$\leq n$} sur un anneau noeth\'erien
de dimension de Krull $d$ peut \^etre
engendr\'e par $n+d$ \'el\'ements.
Nous prouvons une g\'en\'eralisation de ce r\'esultat o\`u le mot ``module''
est remplac\'e par ``alg\`ebre''. Les alg\`ebres consid\'er\'ees ici sont
de type fini mais non n\'ecessairement unitaires, commutatives ou m\^eme
associatives. Le th\'eor\`eme de Forster peut \^etre d\'eduit du cas
particulier o\`u un module est vu comme une alg\`ebre dont le produit
de deux \'el\'ements quelconques est $0$.
\end{abstract}
\end{otherlanguage}
}

\section{Introduction}

Throughout this paper $R$ will denote a commutative Noetherian ring with $1$.
For $\frakp\in \Spec R$, $R(\frakp)$ will denote the fraction field
of $R/\frakp$.  The starting point of this paper is the following
classical theorem of Forster.

\begin{thm}[{\cite{forster}}]
\label{thm.forster}
Suppose $R$ is Noetherian
of Krull dimension $d$ and let $M$ be a finite $R$-module.
If the $R(\frakp)$-module $M(\frakp) := M \otimes_R R(\frakp)$
can be generated by $n$ elements for every $\frakp\in\Max R$,
then $M$ can be generated by $n+d$ elements.
\end{thm}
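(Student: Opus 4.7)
The plan is to combine two ingredients: (i) an upper semicontinuity argument that extends the hypothesis from maximal ideals to all primes of $R$, and (ii) an inductive ``basic element'' construction that produces generators one at a time.

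Write $\mu(\frakp) := \dim_{R(\frakp)} M(\frakp)$, which by Nakayama's lemma equals the minimum number of generators of $M_\frakp$. The function $\mu \colon \Spec R \to \N$ is upper semicontinuous, since for each $k$ the locus $\{\frakp : \mu(\frakp) \geq k\}$ is the closed vanishing locus of a Fitting ideal. Because $R$ is Noetherian, every nonempty closed subset of $\Spec R$ contains a maximal ideal; hence the hypothesis $\mu(\frakm) \leq n$ for all $\frakm \in \Max R$ forces $\mu(\frakp) \leq n$ for every $\frakp \in \Spec R$. Setting $b(\frakp) := \mu(\frakp) + \dim R/\frakp$, we then have $b(\frakp) \leq n + d$ everywhere on $\Spec R$.

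Next, I would construct $y_1, \dots, y_{n+d} \in M$ generating $M$ by maintaining, after $k$ steps, the invariant that $y_1, \dots, y_k$ generate $M_\frakp$ for every $\frakp \in \Spec R$ with $b(\frakp) \leq k$. At $k = n+d$ this holds at \emph{every} prime, so $y_1, \dots, y_{n+d}$ generate $M$ locally everywhere, hence globally since $M$ is finite. The inductive step is a basic element lemma in the spirit of Swan--Eisenbud--Evans: starting from a fixed finite generating set $m_1, \dots, m_N$ of $M$, we seek $y_{k+1}$ of the form $m_N + \sum_{i=1}^k r_i y_i + \sum_{j=1}^{N-1} s_j m_j$ with $r_i, s_j \in R$ such that $y_{k+1}$ improves the local generating set at every prime $\frakp$ with $b(\frakp) = k+1$ at which $y_1, \dots, y_k$ fail to generate $M_\frakp$. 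The set of such ``bad'' primes has only finitely many maximal elements (they lie in the support of a relevant finitely generated quotient module), and prime avoidance applied to these maximal bad primes delivers the desired coefficients $r_i, s_j$.

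The main obstacle is the basic element lemma itself: one must verify the finiteness of maximal bad primes at each stage and then package prime avoidance so that, at each such $\frakp$, the image of $y_{k+1}$ in $M(\frakp)$ lies outside the $R(\frakp)$-span of $y_1, \dots, y_k$. The bounds $\mu(\frakp) \leq n$ and $\dim R/\frakp \leq d$ enter precisely to guarantee that after $n+d$ iterations no bad primes remain, while the rest of the argument is careful bookkeeping tracking how the invariant propagates from step $k$ to step $k+1$, together with the global-from-local passage available for finite modules.
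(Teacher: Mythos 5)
Your first ingredient is sound: upper semicontinuity of $\mu(\frakp)=\dim_{R(\frakp)}M(\frakp)$ via Fitting ideals, together with the fact that every nonempty closed subset of $\Spec R$ contains a maximal ideal, does extend the bound $\mu(\frakp)\le n$ from $\Max R$ to all of $\Spec R$. The gap is in your inductive invariant. Knowing only that $y_1,\dots,y_k$ generate $M_\frakp$ for every $\frakp$ with $b(\frakp)\le k$ says nothing about the span of $y_1(\frakp),\dots,y_k(\frakp)$ at a prime with $b(\frakp)=k+1$, while a single new element can enlarge that span by at most one dimension. Concretely, take $R=F$ a field and $M=F^3$, so $n=3$, $d=0$, and $b(0)=3$: your invariant is vacuous at stages $k=1,2$, so nothing in the construction prevents $y_1=y_2=m_3$ (both of the prescribed shape $m_N+\cdots$ with all auxiliary coefficients zero), and then no single $y_3$ completes a generating triple. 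Thus the step ``$y_{k+1}$ improves the local generating set at every bad prime with $b(\frakp)=k+1$'' cannot, as stated, upgrade ``fails to generate'' to ``generates''. You must carry a quantitative invariant at \emph{all} primes --- for instance that $y_1(\frakp),\dots,y_k(\frakp)$ span a subspace of $M(\frakp)$ of dimension at least $\min\bigl(\mu(\frakp),\,k-\dim R/\frakp\bigr)$ --- or else run the induction top--down on $b$, choosing an element basic at the finitely many primes where $b$ attains its maximum (these are generic points of components of the Fitting loci) and passing to the quotient module, which is Forster's and Swan's original route.

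A second, smaller issue: the finite set of primes you can hope to control at each stage is the set of \emph{generic points} (minimal primes) of the components of the closed bad locus, not its maximal elements, which may be infinite; and arranging generation at a generic point only buys you generation on a dense open subset of the corresponding component. This is exactly where the budget of $d$ extra generators is spent, and it is what the paper's proof of Theorem~\ref{thm.main} (of which Theorem~\ref{thm.forster} is the zero-multiplication special case) formalizes: the partition $X=\calF_0^{(j)}\sqcup\dots\sqcup\calF_n^{(j)}$ with the bound $\dim\calF_i^{(j)}\le\dim X+i-j$ is precisely the stronger stratified invariant your argument is missing, and the Chinese Remainder Theorem together with the openness statement of Lemma~\ref{LM:open-completable-generation} plays the role you assign to prime avoidance.
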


Swan~\cite{swan} showed that Theorem~\ref{thm.forster} remains
valid when the Krull dimension of $R$ is replaced by the dimension
of $\Max R$.
\footnote{Recall that the dimension of a topological
space $X$ is the maximal length $d$ of
a chain $\emptyset\neq X_0 \subsetneq
X_1 \subsetneq \dots \subsetneq X_d \subseteq X$ of closed irreducible
subsets (or $- \infty$). The Krull dimension of $R$ is
the dimension of $\Spec R$ endowed with the Zariski topology.
The maximal spectrum $\Max R$ is a subspace of $\Spec R$, hence
$\dim \Max R  \leqslant \dim \Spec R $.}
Further generalizations and refinements of Forster's Theorem
can be found in~\cite{swan, eisenbud-evans, warfield, kks}.
This note offers yet another generalization, replacing
the finite $R$-module $M$
by a finite $R$-algebra $A$, i.e., by a finitely generated $R$-module
$A$ with an $R$-bilinear multiplication
map $A \times A \to A$. This bilinear map
can be arbitrary; we do not require $A$ to be
commutative or associative, or to have a unit element.
For $\frakp\in \Spec R$, write $A(\frakp): =A\otimes_RR(\frakp)$.
	
	\begin{thm}\label{thm.main}
    Assume $\dim\Max R=d$ and
let $A$ be a finite $R$-algebra
such that $A(\frakp)$ can be generated by $n$ elements as a non-unital
$R(\frakp)$-algebra for every $\frakp \in \Max R $.
Then $A$ can be generated by $n+d$ elements as a non-unital
$R$-algebra.
\end{thm}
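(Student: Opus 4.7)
The plan is to adapt the classical Forster--Swan strategy to the non-unital, non-associative algebra setting, proceeding by induction on $d=\dim\Max R$. The target is to produce $n+d$ elements $a_1,\dots,a_{n+d}\in A$ whose images generate $A(\frakp)$ for every $\frakp\in\Max R$; once we have this, a Nakayama argument applied to the finitely generated $R$-module $A/B$, where $B\subseteq A$ is the $R$-submodule spanned by the subalgebra $\langle a_1,\dots,a_{n+d}\rangle$, will give $B=A$ globally, since $B_\frakm +\frakm A_\frakm = A_\frakm$ forces $A_\frakm=B_\frakm$ and then $A=B$.

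\textbf{Base case ($d=0$).} Since $R$ is Noetherian and $\dim\Max R=0$, the space $\Max R$ is a finite discrete set $\{\frakm_1,\dots,\frakm_s\}$. Using the Chinese Remainder Theorem (applied to the pairwise comaximal $\frakm_i$), lift a choice of $n$ generators of each $A(\frakm_i)$ to a common tuple $(a_1,\dots,a_n)\in A^n$ whose image in $A(\frakm_i)$ generates $A(\frakm_i)$ for every $i$. The Nakayama step above then yields generation by $n=n+d$ elements.

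\textbf{Inductive step.} Assume $d\geq 1$ and that the result holds in max-spectrum dimension $<d$. Call a tuple $(a_1,\dots,a_k)$ \emph{$n$-basic at $\frakp$} if its images in $A(\frakp)$ can be extended to a generating set of $A(\frakp)$ of size at most $n$; the hypothesis is that the empty tuple is $n$-basic at every $\frakp\in\Max R$. The core technical device is a basic-element lemma of the following shape: given an $n$-basic tuple $(a_1,\dots,a_k)$ such that the \emph{bad locus}
\[
Z_k=\{\frakp\in\Max R : a_1(\frakp),\dots,a_k(\frakp)\ \text{do not generate}\ A(\frakp)\}
\]
has $\dim Z_k\leq e$, one can find $a_{k+1}\in A$ so that $(a_1,\dots,a_{k+1})$ is still $n$-basic everywhere and $\dim Z_{k+1}\leq e-1$. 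The element $a_{k+1}$ is produced by prime avoidance: choose at each generic point $\frakq_i$ of an irreducible component of $Z_k$ an element of $A(\frakq_i)$ that extends $(a_1,\dots,a_k)$ toward a generating set, glue these choices via CRT modulo a suitable radical ideal, and then perturb by an element vanishing on $Z_k$ to preserve $n$-basicness at the remaining primes. Starting from $k=0$, $e=d$, and iterating until $Z_{n+d}=\emptyset$ (which needs at most $n+d$ steps, as each step drops the dimension by one while the hypothesis of $n$-basicness keeps an initial budget of $n$ elements), we obtain the required tuple.

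\textbf{Main obstacle.} The delicate point is the basic-element lemma in the non-associative algebra setting. For modules, ``basic at $\frakp$'' is a linear condition and the relevant loci are readily seen to be closed and to admit prime avoidance. Here the subalgebra generated by a tuple depends on arbitrary iterated products, so the locus where a given tuple fails to generate is a priori only constructible, and one must show it behaves like a closed subset of $\Max R$ in a manner compatible with the induction. Concretely, the hardest part is verifying that the bad locus $Z_k$ is closed of the expected dimension, and that the CRT--plus--perturbation construction of $a_{k+1}$ genuinely drops the dimension of $Z_k$ by one without breaking $n$-basicness; all other steps (base case, Nakayama wrap-up, induction bookkeeping) are essentially formal.
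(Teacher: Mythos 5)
Your high-level strategy --- build the $n+d$ elements one at a time, using CRT at points chosen on the irreducible components of a bad locus, and using the openness of the generation condition to drop its dimension --- is indeed the skeleton of the paper's argument. But the invariant you propose to carry through the iteration is not the right one, and the ``basic-element lemma'' you defer as the main obstacle is exactly where the proof lives; as you have stated it, that lemma is false.

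Concretely, you declare $(a_1,\dots,a_k)$ to be $n$-basic at $\frakp$ when the images of \emph{all} of $a_1,\dots,a_k$ extend to a generating set of $A(\frakp)$ of size at most $n$, and you ask for $a_{k+1}$ keeping the tuple $n$-basic \emph{everywhere} while cutting $\dim Z_k$ by one. This cannot be maintained: already in the module case (zero multiplication) take $A$ a non-principal ideal of a Dedekind domain, $n=1$, $d=1$; every $A(\frakp)$ is generated by one element, but any $a_1$ fails to generate at some maximal ideals, and at those $\frakp$ the tuple $(a_1)$ is not $1$-basic and no later choice repairs this --- yet the theorem still promises two generators. Moreover, once $k>n$ your condition is unsatisfiable outright, and the proposed fix of perturbing by an element vanishing on $Z_k$ does nothing at the primes of $Z_k$ where preservation is actually at issue. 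The paper's resolution is to weaken the invariant: at each $\frakp$ one only requires that \emph{some subtuple} of the elements chosen so far, of a size $i$ depending on $\frakp$, lie in the completable set $V_{n,i}(R(\frakp))$, and one tracks $i$ through a partition $X=\calF_0^{(j)}\sqcup\dots\sqcup\calF_n^{(j)}$ with the quantitative bound $\dim\calF_i^{(j)}\le \dim X+i-j$ for $i<n$. With that bookkeeping a new element can never destroy recorded progress (it is simply not counted at primes where it is useless), the openness of completability (Lemma~\ref{LM:open-completable-generation}) lets one remove an open set meeting every component of each stratum and so drop its dimension, and after $n+d$ steps the strata with $i<n$ are forced to be empty. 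Without this (or an equivalent) device your outline does not close up into a proof.
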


In the case  where the multiplication map $A \times A \to A$
is identically zero, we recover Forster's Theorem~\ref{thm.forster}.
Other applications of Theorem~\ref{thm.main}
can be found in Section~\ref{sect.applications}.
Before proceeding with the proof, we remark that
our argument also proves the following variants of Theorem~\ref{thm.main}.

(i) If $A$ is a unital algebra,
Theorem~\ref{thm.main} remains valid if we replace ``generated
as a non-unital algebra'' with ``generated as a unital algebra''.

(ii) Both the original and the unital versions of Theorem~\ref{thm.main}
remain valid in the setting of~\cite{swan}, where
$A$ is equipped with a left $\Lambda$-module structure, $\Lambda$ being an $R$-algebra,
and generation means generation   as an $R$-algebra carrying
an additional $\Lambda$-module structure. Note that, unlike
\cite[Theorem~1]{swan}, we do not require $\Lambda$ to
be finitely generated as an $R$-module.

(iii) More generally, $A$ can be taken to be a finite $R$-multialgebra,
i.e., a finite right $R$-module equipped with an indexed family
of homogeneous maps $\{f_i:A^{n_i}\to A\}_{i\in I}$.
Here we say that $f:A^{k}\to A$ is
$(m_1,\dots,m_{k})$-homogeneous, if
$f(a_1r_1,\dots,a_{k}r_{k})= f(a_1,\dots,a_k)
r_1^{m_1} \dots r_k^{m_k}$
for all $a_1,\dots,a_{k}\in A$ and $r_1,\dots,r_{k}\in R$.
Note that $k = 0$ is allowed; in this case $f$ can
be any map from $A^0=0$ to $A$.
The family is $\{f_i\}_{i\in I}$ is clearly amenable
to base change, hence $A(\frakp)$ carries the structure of an
$R(\frakp)$-multialgebra. A multisubalgebra of $A$ is an $R$-submodule
closed under each $f_i$, and the multisubalgera generated
by $S \subset A$ is the smallest multisubalgbera containing $S$.
Multialgebras can be used to encode many types of structures.
For example, a (non-unital) $R$-algebra structure on $A$
is a $(1,1)$-homogeneous map $A^2\to A$,
a unit element can be specified by a  map $A^0 \to A$,
an involution by a $(1)$-homogeneous map $A\to A$, a quadratic Jordan
algebra structure by a $(2,1)$-homogeneous map $A^2\to A$, etc.
Furthermore, if $\Lambda$ is an associative $R$-algebra, then a left
$\Lambda$-module structure, as in (ii), can be represented by
the family of $(1)$-homogeneous maps $\{f_{\lambda} \colon A \to A\}_{\lambda\in\Lambda}$,
given by $f_\lambda(a)=\lambda a$.

\section{Preliminary Lemmas}
\label{sect.prelim}
    Let $A$ be a finite $R$-algebra.
    For $\frakp\in\Spec R$ and $a\in A$,   denote the image of $a$
	in $A(\frakp)$ by $a(\frakp)$.
	
	\begin{lem}\label{LM:local-generation}
	Let $a_1,\dots,a_n\in A$. Then $a_1,\dots,a_n$ generate
	$A$ as an $R$-algebra if and only if for all $\frakp\in \Max R$,
	the elements $a_1(\frakp),\dots,a_n(\frakp)$ generate $A(\frakp)$
		as an $R(\frakp)$-algebra.
	\end{lem}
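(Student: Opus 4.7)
The plan is to reduce both implications to straightforward observations plus a local Nakayama argument. The forward implication is immediate: the reduction map $A \to A(\frakp)$ is a homomorphism of non-unital $R$-algebras carrying $a_i$ to $a_i(\frakp)$, so the image of any $R$-algebra expression in the $a_i$ is the analogous $R(\frakp)$-algebra expression in the $a_i(\frakp)$; hence if the $a_i$ generate $A$ as an $R$-algebra, the $a_i(\frakp)$ generate $A(\frakp)$ as an $R(\frakp)$-algebra for every $\frakp$, not just for $\frakp\in\Max R$.

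For the converse, I would let $B\subseteq A$ denote the $R$-subalgebra generated by $a_1,\dots,a_n$. Since an $R$-subalgebra is by definition closed under $R$-scalar multiplication, $B$ is an $R$-submodule, and the quotient $A/B$ is a well-defined $R$-module, finitely generated because $A$ is. The goal becomes to show $A/B=0$. Fix $\frakm\in\Max R$. Because $\frakm$ is maximal, $R(\frakm)=R/\frakm$ and hence $A(\frakm)=A/\frakm A$. The natural map $A\to A(\frakm)$ sends $B$ onto the $R(\frakm)$-subalgebra of $A(\frakm)$ generated by $a_1(\frakm),\dots,a_n(\frakm)$, which by hypothesis is all of $A(\frakm)$. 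Translating back, $B+\frakm A=A$, that is, $\frakm\cdot(A/B)=A/B$. Localizing at $\frakm$ and applying Nakayama's lemma to the finitely generated $R_\frakm$-module $(A/B)_\frakm$ forces $(A/B)_\frakm=0$. Since this holds for every $\frakm\in\Max R$, we conclude $A/B=0$ and thus $B=A$.

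The argument is essentially routine, and I do not expect any genuine obstacle. The only feature worth flagging is the reliance on $\frakm$ being maximal in the identification $A(\frakm)=A/\frakm A$ (which fails for a general prime); this is precisely why the hypothesis in the lemma is stated over $\Max R$ rather than over all of $\Spec R$. The same argument transfers verbatim to the unital, $\Lambda$-module, and multialgebra variants noted in the introduction, since in each case the subobject generated by a finite set is still an $R$-submodule and the base change $A\to A(\frakm)$ is still a homomorphism of the relevant structure.
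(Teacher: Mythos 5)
Your proof is correct and follows essentially the same route as the paper's: pass to the subalgebra $B$ generated by the $a_i$, use Nakayama at each maximal ideal, and conclude by the local-to-global principle (the paper phrases this via the map $B_\frakp\to A_\frakp$ being an isomorphism, you via $(A/B)_\frakm=0$, which is the same argument). Your closing remark is slightly off in one inessential respect: the identification $A(\frakp)=A_\frakp/\frakp A_\frakp$ and the Nakayama step actually go through for any prime $\frakp$, so the restriction to $\Max R$ is simply the weakest (hence strongest-to-use) hypothesis rather than a necessity of the method.
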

	
	\begin{proof}
		Let $B$ be the $R$-subalgebra generated by $a_1,\dots,a_n$.
		The map $B(\frakp)\to A(\frakp)$ induced by the inclusion $B\hookrightarrow A$
		is an isomorphism for all $\frakp$. Since
		$A$ is a finite $R$-algebra, Nakayama's Lemma implies that the map $B_\frakp\to A_\frakp$
		is an isomorphism  for all $\frakp\in\Max R$.
        It is well known that this implies $B=A$.
	\end{proof}
	
	\begin{lem}\label{lem.open-generation}
Suppose $a_1,\dots,a_n\in A$ and $\frakp\in\Spec R$.
If $a_1(\frakp),\dots,a_n(\frakp)$ generate
$A(\frakp)$ as an $R(\frakp)$-algebra,
then there exists an open neighborhood $U$~of $\frakp$ in $\Spec R$
such that $a_1(\frakq),\dots,a_n(\frakq)$ generate $A(\frakq)$
for any $\frakq\in U$.
	\end{lem}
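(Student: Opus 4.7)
The plan is to reduce the statement to the standard fact that the support of a finitely generated module over a Noetherian ring is a closed subset of $\Spec R$.

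First, I would let $B$ denote the (non-unital) $R$-subalgebra of $A$ generated by $a_1,\dots,a_n$. Because $R$ is Noetherian and $A$ is a finite $R$-module, the submodule $B\subseteq A$ is itself a finite $R$-module, and therefore so is the quotient $M:=A/B$.

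Next, I would translate both the hypothesis and the desired conclusion into the vanishing of $M(\frakq)$. The key point is that formation of the $R$-subalgebra generated by a given tuple commutes with base change: by right exactness of $-\otimes_R R(\frakq)$, the image of the induced map $B(\frakq)\to A(\frakq)$ equals the $R(\frakq)$-subalgebra of $A(\frakq)$ generated by $a_1(\frakq),\dots,a_n(\frakq)$, and the cokernel of this map is $M(\frakq)$. Thus, for any $\frakq\in\Spec R$, the elements $a_i(\frakq)$ generate $A(\frakq)$ as an $R(\frakq)$-algebra if and only if $M(\frakq)=0$.

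The hypothesis $M(\frakp)=0$ is equivalent to $M_\frakp=\frakp R_\frakp\cdot M_\frakp$; since $M_\frakp$ is finite over the local ring $R_\frakp$, Nakayama's lemma gives $M_\frakp=0$, i.e., $\frakp\notin\mathrm{Supp}_R(M)$. Because $M$ is a finite $R$-module, $\mathrm{Supp}_R(M)=V(\Ann_R M)$ is closed in $\Spec R$, so $U:=\Spec R\setminus\mathrm{Supp}_R(M)$ is an open neighborhood of $\frakp$. For every $\frakq\in U$ we have $M_\frakq=0$, hence $M(\frakq)=0$, which by the equivalence above yields that $a_1(\frakq),\dots,a_n(\frakq)$ generate $A(\frakq)$.

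The only nontrivial step is the compatibility of subalgebra generation with base change; this holds uniformly in the non-unital, unital, and more general multialgebra settings of variant~(iii) of the introduction, because it follows formally from the universal property of the appropriate free algebra together with the right exactness of the tensor product.
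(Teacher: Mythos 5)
Your proof is correct and is essentially the paper's argument in lightly repackaged form: both reduce the lemma to Nakayama plus the spreading-out of a finitely generated module that measures the failure of generation ($A/B$ in your case, $A/\sum_i b_iR$ for finitely many monomials $b_i$ in the paper's), and your base-change compatibility claim for the generated subalgebra is valid since that subalgebra is the span of the non-associative monomials in the $a_i$. The only cosmetic difference is that the paper chooses finitely many spanning monomials using the finite-dimensionality of $A(\frakp)$ and exhibits an explicit $s\in R\setminus\frakp$ annihilating the quotient, whereas you work with the full subalgebra $B$ and invoke the closedness of $\mathrm{Supp}_R(A/B)=V(\Ann_R(A/B))$.
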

	
	\begin{proof} By our assumption, there exist (non-associative)
                monomials $\omega_1,\dots,\omega_t$ on $n$ letters
		such that
		$A(\frakp)$ is spanned
		by $\{\omega_i(a_1(\frakp),\dots,a_n(\frakp))\}_{i=1}^t$
                as an $R(\frakp)$-module.
		Write $b_i=\omega_i(a_1,\dots,a_n)$. By Nakayama's Lemma,
		$A_\frakp$ is spanned
		as an $R_\frakp$-module by the images of $b_1,\dots,b_t$.
		Let $B=\sum_ib_iR$. Then $(A/B)_\frakp=0$. Since $A$
		is finitely generated, there is $s\in R\setminus \frakp$
		such that $(A/B)s=0$. Thus, for any $\frakq\in\Spec R$
		not containing $s$, we have $(A/B)_\frakq=0$. Hence,
		$a_1(\frakq),\dots,a_n(\frakq)$ generate $A(\frakq)$
as an $R(\frakq)$-algebra.
	\end{proof}
	
	To state the next lemma, we need some additional notation.
	Let $n\in\N$. For any commutative associative unital $R$-algebra $S$,
	let  $A_S=A\otimes_RS$ and write
	\[
	V_n(S)=\left\{(a_1,\dots, a_n)\in A_S^n\suchthat \text{$a_1,\dots,a_n$ generate $A_S$
	as an $S$-algebra}\right\}\ .
	\]
	For all $0\leq i\leq n$, we further let
	\[
	V_{n,i}(S)=
    \left\{(a_1,\dots,a_i)\in A_S^i\suchthat \text{$\exists$ $a_{i+1},\dots,a_n\in A_S$
    such that $(a_1,\dots,a_n)\in V_n(S)$}\right\}\ .
	\]

	\begin{lem}\label{LM:open-completable-generation}
		Let $\frakp\in\Spec R$, let $a_1,\dots,a_i\in A$,
		and assume that $(a_1(\frakp),\dots,a_i(\frakp))\in V_{n,i}(R(\frakp))$.
		Then there exists an open neighborhood $U$ of $\frakp$ in $\Spec R$
		such that $(a_1(\frakq),\dots,a_i(\frakq))\in V_{n,i}(R(\frakq))$ for all $\frakq\in U$.
	\end{lem}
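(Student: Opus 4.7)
The plan is to reduce to Lemma~\ref{lem.open-generation} by lifting the auxiliary generators, which a priori live only in $A(\frakp)$, to a single localization $A[1/s]$ for some $s\in R\setminus\frakp$. The desired open neighborhood of $\frakp$ will then arise from an open neighborhood of $\frakp R[1/s]$ in $\Spec R[1/s]$, viewed as an open subset of $\Spec R$ via the standard identification.

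First I would invoke the hypothesis to choose $b_{i+1},\dots,b_n\in A(\frakp)$ such that $(a_1(\frakp),\dots,a_i(\frakp),b_{i+1},\dots,b_n)\in V_n(R(\frakp))$. Since $R(\frakp)=\operatorname{Frac}(R/\frakp)$ and $A(\frakp)=A\otimes_R R(\frakp)$, every element of $A(\frakp)$ is a finite $R(\frakp)$-linear combination of pure tensors $a\otimes 1$ with $a\in A$. Clearing denominators simultaneously for all $j>i$ yields a single $s\in R\setminus\frakp$ and elements $c_{i+1},\dots,c_n\in A$ such that $b_j$ equals the image of $c_j\otimes s^{-1}$ in $A(\frakp)$.

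Next I would pass to the localization $R'=R[1/s]$ and $A'=A\otimes_R R'$ (which is again a finite algebra over the Noetherian ring $R'$), in which the elements $a'_j:=c_j\otimes s^{-1}\in A'$ are well-defined for $j>i$. By construction, the image of the tuple $(a_1,\dots,a_i,a'_{i+1},\dots,a'_n)$ in $A'(\frakp R')=A(\frakp)$ is exactly $(a_1(\frakp),\dots,a_i(\frakp),b_{i+1},\dots,b_n)$, which generates $A(\frakp)$ as an $R(\frakp)$-algebra. Applying Lemma~\ref{lem.open-generation} over $R'$ to $A'$ and $\frakp R'$ produces an open neighborhood $V$ of $\frakp R'$ in $\Spec R'$ on which this $n$-tuple continues to generate.

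Finally, the standard open immersion $\Spec R'\hookrightarrow \Spec R$ identifies $V$ with an open neighborhood $U$ of $\frakp$ in $\Spec R$ (necessarily contained in $D(s)$). For each $\frakq\in U$, the canonical identifications $R(\frakq)=R'(\frakq R')$ and $A(\frakq)=A'(\frakq R')$ then exhibit $(a'_{i+1}(\frakq),\dots,a'_n(\frakq))$ as a completion of $(a_1(\frakq),\dots,a_i(\frakq))$ to a generating $n$-tuple of $A(\frakq)$, whence $(a_1(\frakq),\dots,a_i(\frakq))\in V_{n,i}(R(\frakq))$. The only real subtlety is the common-denominator bookkeeping in the lifting step; once that is in hand, the result follows formally from the previous lemma.
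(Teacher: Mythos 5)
Your proof is correct and follows essentially the same route as the paper: both arguments reduce to Lemma~\ref{lem.open-generation} by lifting the completing tuple $b_{i+1},\dots,b_n$ from $A(\frakp)$ after clearing denominators. The paper's only shortcut is to avoid the base change to $R[1/s]$ altogether: since membership in $V_n(R(\frakp))$ is unaffected by scaling each coordinate by a unit of $R(\frakp)$, one may multiply each $b_j$ by the image of the common denominator $s$ and thereby assume each $b_j$ is already the image of an element of $A$, so that Lemma~\ref{lem.open-generation} applies directly over $R$ with no need to identify $\Spec R[1/s]$ with an open subset of $\Spec R$.
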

	
	\begin{proof}
		There are $b_{i+1},\dots,b_n\in A(\frakp)$ such
that $a_1(\frakp),\dots,a_i(\frakp),b_{i+1},\dots,b_n$
		generate $A(\frakp)$. After multiplying
$b_{i+1},\dots,b_n$ by suitable invertible elements of $R(\frakp)$,
we may assume that each $b_j$ is the image of some element $a_j\in A$.
By Lemma~\ref{lem.open-generation}, there is an open neighborhood $U$ of
		$\frakp$ such that for all
		$\frakq\in U$, the elements $a_1(\frakq),\dots,a_n(\frakq)$ generate $A(\frakq)$.
		In particular, $(a_1(\frakq),\dots,a_i(\frakq))\in V_{n,i}(R(\frakq))$.
	\end{proof}

\section{Proof of Theorem~\ref{thm.main}}
\label{sect.proof}

We claim that
for every $0\leq j\leq n+d$, there exist
elements $a_1, \dots, a_{j} \in A$ and a partition of $X:=\Max R$
into locally closed subsets
$
X=\calF_0^{(j)}\sqcup \calF_1^{(j)} \sqcup \dots\sqcup\calF_n^{(j)}
$
with the following properties:
		\begin{enumerate}
\item[(1)] For any $i \geqslant 1$ and any $\frakp \in \calF_i^{(j)}$,
there are $t_1,\dots,t_i\in\{1,\dots,j\}$
such that \[ (a_{t_1}(\frakp),\dots,a_{t_i}(\frakp))\in V_{n,i}(R(\frakp)). \]
\item[(2)] 
$\dim \calF_i^{(j)}\leq \dim X+i-j$
for
every $0 \leq i< n$. 
		\end{enumerate}

For $j=n+d$, condition (2) implies that $\calF_i^{(n+d)}=\emptyset$
for all $0 \leq i<n$, hence $X= \calF_n^{(n+d)}$.
		Condition (1) then
tells us that for every $\frakp\in X$, there
are $t_1,\dots,t_n\in\{1,\dots,{n+d}\}$ such
that $(a_{t_1}(\frakp),\dots,a_{t_n}(\frakp))\in V_{n,n}(R(\frakp))=
		V_n(R(\frakp))$. In particular,
$a_1(\frakp),\dots,a_{n+d}(\frakp)$ generate $A(\frakp)$ as
an $R(\frakp)$-algebra
for every $\frakp\in X$.  Lemma~\ref{LM:local-generation}
now implies that $a_1, \dots, a_{n+d}$ generate $A$ as an $R$-algebra,
proving the theorem.
		
To prove the claim, we will construct the elements $a_1,\dots,a_j \in A$
and the partition
$X=\calF_0^{(j)}\sqcup \calF_1^{(j)}\sqcup \dots\sqcup
\calF_n^{(j)}$
by induction on $j$.  For the base case $j = 0$,
set $\calF^{(0)}_0 := X$ and
$\calF^{(0)}_1 =\dots =\calF^{(0)}_n : =\emptyset$.
Condition (2)  clearly holds and
condition (1) is vacuous.
		
For the induction step,
assume that  elements $a_1,\dots,a_j \in A$
and a partition $X=\calF_0^{(j)}\sqcup \calF_1^{(j)}\sqcup \dots\sqcup
\calF_n^{(j)}$ satisfying conditions (1) and (2)
have been constructed for some $0 \leqslant j < n + d$.
We shall choose an  element
$a_{j+1} \in A$ as follows.
For each $0 \leqslant i < n$,  choose finitely many distinct
points $\frakp_{i,1}, \dots, \frakp_{i, N_i} \in \calF_i^{(j)}$
meeting all irreducible components of $\calF_i^{(j)}$ (here we
are using our standing assumption that $R$ is Noetherian).
By condition (1), for each point
$\frakp_{i, s}$, there exist integers
$t_1,\dots,t_i\in\{1,\dots,j\}$ (depending on $i$ and $s$)
such that $(a_{t_1}(\frakp_{i, s}),\dots,a_{t_i}(\frakp_{i, s}))\in V_{n,i}
(R(\frakp_{i, s}))$.  Therefore, for each point $\frakp_{i, s}$
		there exists  $b_{i,s} \in A(\frakp_{i,s})$
		such that $(a_{t_1}(\frakp_{i,s}),\dots,a_{t_i}(\frakp_{i,s}),
b_{i,s}) \in V_{n,i+1}(R(\frakp_{i, s}))$.
Since the sets $\calF_0^{(j)}, \calF_1^{(j)}, \dots, \calF_{n-1}^{(j)}$
are disjoint, the points $\frakp_{i, s}$ are all distinct.
By the  Chinese Remainder Theorem, there exists
$a_{j+1}\in A$ such that $a_{j+1}(\frakp_{i, s})= b_{i,s}$
	for every $i = 1, \ldots, n-1$, and every $s = 1, \ldots, N_i$.
		
Now, by Lemma~\ref{LM:open-completable-generation}, for each $i$ and $s$
as above, there is an open subset $U_{i,s}$ of $X$ containing
$\frakp_{i, s}$ such that
		\begin{equation}\label{EQ:cond-ii}
(a_{t_1}(\frakp),\dots,a_{t_i}(\frakp),a_{j+1}(\frakp))\in V_{i+1,n}(R(\frakp))
		\end{equation}
		for all $\frakp \in U_{i, s}$. Let $U_i$ be the union of
$U_{i, s}$, as $s$ ranges from $1$ to $N_i$, and set $U_n=\emptyset$.
Now set
\begin{equation} \label{e.recursive}
 \calF_i^{(j+1)} : =
 \left\{ \begin{array}{ll}
(\calF_{i-1}^{(j)} \cap U_{i-1})  \cup
(\calF_i^{(j)} \setminus U_i) & \text{if $i = 1, \ldots, n$, and} \\
\calF_0^{(j)} \setminus U_0 & \text{if $i = 0$.}
\end{array}
\right.
\end{equation}
It is easy to see that $\{\calF_0^{(j+1)},\dots,\calF_n^{(j+1)}\}$ is a partition of $X$.
Let $0\leq i<n$. By our construction, $U_i$ meets all
irreducible components of $\calF_i^{(j)}$, hence
		\begin{equation}\label{EQ:cond-iii}
		\dim (\calF_i^{(j)} \setminus U_i) \leqslant
		\dim (\calF_i^{(j)}) - 1\ .
		\end{equation}
Conditions (1), (2) for the elements $a_1,\dots,a_j,a_{j+1}\in A$ and the partition $X=\bigsqcup_i\calF_i^{(j+1)}$
now readily  follow from \eqref{EQ:cond-ii}, \eqref{e.recursive}
and \eqref{EQ:cond-iii}.
This completes the proof of Theorem~\ref{thm.main}.
\qed
	
\section{Applications}
\label{sect.applications}

%
Let $A$ and $B$ be $R$-algebras.
We say that $A$ is a form of $B$ (or equivalently,
$B$ is a form of $A$) if there is a faithfully flat
commutative unital $R$-algebra $S$ such that $A\otimes_R S\cong B \otimes_R S$
as $S$-algebras.
For example,  an \emph{Azumaya} $R$-algebra of degree $n$
is a form of the matrix algebra $\Mat_n(R)$,
 a finite \emph{\'etale} $R$-algebra of rank $n$ is a form of
$R\times\dots\times R$ ($n$ times),
a \emph{Cayley} $R$-algebra is a form of the split octonion algebra
$\mathbb{O}_R$ (see~\cite[Corollary 4.11]{lpr}
or~\cite[Theorem 3.9]{petersson}), and  when
$2\in\units{R}$, an  \emph{Albert} $R$-algebra
is a form of the split Albert algebra
$H_3(\mathbb{O}_R)$, where $H_3$ denotes the space
of $3 \times 3$ Hermitian matrices
(see~\cite[Theorem~6.9]{petersson}).

\begin{prp}\label{PR:generators-of-forms}
Let $A$ and $B$ be finite-dimensional algebras over an infinite field $F$
and
assume $A$ is a form of $B$. If $B$ can be generated by $n$ elements,
then $A$ can also be generated by $n$ elements.
\end{prp}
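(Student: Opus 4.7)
My strategy is to show that the set of generating $n$-tuples of $A$ is the $F$-points of a non-empty Zariski open subscheme of $\bbA^{nd}_F$, where $d := \dim_F A = \dim_F B$, and then use the infinitude of $F$ to extract an $F$-rational point.

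I begin by showing that the generating locus is Zariski open. Fix an $F$-basis $e_1,\dots,e_d$ of $A$, set $R := F[x_{ij} \suchthat 1\leq i\leq n,\, 1\leq j\leq d]$, and form the universal tuple
\[
\tilde a_i := \sum_{j=1}^d x_{ij}\,(e_j\otimes 1) \in A\otimes_F R, \qquad 1\leq i\leq n.
\]
Since $A$ is finite-dimensional over $F$, $A\otimes_F R$ is a finite $R$-module, so Lemma~\ref{lem.open-generation} applies and shows that
\[
W^A := \bigl\{\frakq\in\Spec R \suchthat \tilde a_1(\frakq),\dots,\tilde a_n(\frakq) \text{ generate } (A\otimes_F R)(\frakq)\bigr\}
\]
is open in $\Spec R = \bbA^{nd}_F$. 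For any field extension $K/F$, a short faithful-flatness argument (using that $K$ is faithfully flat over the residue field of any prime of $R$ lying under a $K$-point) identifies $W^A(K)$ with the set of $n$-tuples in $(A\otimes_F K)^n$ that generate $A\otimes_F K$ over $K$. Define $W^B\subseteq\bbA^{nd}_F$ similarly.

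Next, I reduce the hypothesis that $A$ is a form of $B$ to the existence of a single field extension $K/F$ with $A\otimes_F K\cong B\otimes_F K$. The functor sending a commutative $F$-algebra $T$ to the set of $T$-algebra isomorphisms $A\otimes_F T\to B\otimes_F T$ is represented by an affine $F$-scheme $\calI$ of finite type (realize it as a locally closed subscheme of the affine space $\mathrm{Hom}_F(A,B)$, cut out by the algebra-homomorphism conditions and an open invertibility condition). By hypothesis $\calI$ admits a point over some faithfully flat $F$-algebra, so it is non-empty and hence has a closed point whose residue field $K$ is a finite extension of $F$. By the hypothesis on $B$, $W^B(F)\neq\emptyset$, so $W^B(K)\neq\emptyset$; the isomorphism $A\otimes_F K\cong B\otimes_F K$ puts $W^A(K)$ and $W^B(K)$ in bijection, so the open subscheme $W^A\subseteq\bbA^{nd}_F$ is non-empty. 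Since $F$ is infinite, a non-empty Zariski open subset of $\bbA^{nd}_F$ contains an $F$-rational point, yielding a generating $n$-tuple of $A$.

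The only delicate point is the identification of $W^A(K)$ with generating $n$-tuples of $A\otimes_F K$: the direction that base change preserves generation is immediate, while the reverse direction uses faithful flatness of $K$ over the residue field of the corresponding prime of $R$. All remaining steps---descent from a faithfully flat cover to a field extension, openness via Lemma~\ref{lem.open-generation}, and density of rational points in a non-empty open subset of affine space over an infinite field---are standard.
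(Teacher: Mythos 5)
Your proof is correct and follows essentially the same route as the paper's: both realize the generating $n$-tuples as the $K$-points of an open subscheme of affine $nd$-space, show that subscheme is non-empty by passing to a field extension over which $A$ and $B$ become isomorphic, and conclude via density of rational points in a non-empty open subset of affine space over an infinite field. Your use of Lemma~\ref{lem.open-generation} over the polynomial ring to establish openness, and of the Isom scheme to produce the field $K$ (which could be obtained more simply as $S/\frakm$ for a maximal ideal $\frakm$ of the faithfully flat algebra $S$), merely make explicit the steps the paper leaves to the reader.
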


\begin{proof}
Let $r := \dim_F(A) = \dim_F(B)$. Choose an $F$-basis for $A$ and
use it to identify $A$ with the $F$-points of the affine space $\bbA_F^r$.
It is easy to see that
there exists an open subscheme $U$ of  $(\bbA^r_F)^n$ such that for every
field extension $K/F$, the $K$-points of $U$ are the $n$-tuples
$(x_1, \dots, x_n) \in A_K^n$  that generate
$A_K$ as a $K$-algebra. Our goal is to show that $U$ has an $F$-point. Since $U$ is an open subscheme
of an affine space and $F$ is an infinite field, it suffices to check that
$U \neq \emptyset$.

Choose an $F$-field $K$  such that $A_K \cong B_K$. Since $B$ is generated by $n$ elements
as an $F$-algebra, the same $n$ elements will
generate $B_K$ as a $K$-algebra. As $A_K \cong B_K$, this implies that
$U$ has a $K$-point. Hence $U \neq \emptyset$, as claimed.
\end{proof}

\begin{cor}\label{cor.main} 
Assume the dimension of $\Max R$ is $d$. Then
\begin{enumerate}
\item[(a)] every Azumaya $R$-algebra  is generated by $d + 2$ elements,
\item[(b)] every Cayley $R$-algebra is generated by $d+3$ elements,
\item[(c)] every Albert $R$-algebra is generated by $d+3$ elements,
provided $2\in\units{R}$, \footnote{If $2$ is not
invertible in $R$, then an Albert $R$-algebra should
be regarded as \emph{quadratic} Jordan algebra~\cite[Section 4]{petersson};
cf.\ Remark (iii) in the Introduction.}
and
\item[(d)] every finite \'etale $R$-algebra of rank $n$ is generated by $d+1$ elements,
provided $R/\frakp$ is infinite for any $\frakp\in\Max R$.
\end{enumerate}
\end{cor}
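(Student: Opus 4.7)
The plan is to apply Theorem~\ref{thm.main} in each of (a)--(d) after establishing that, for every $\frakp\in\Max R$, the $R(\frakp)$-algebra $A(\frakp)$ is generated by the small number of elements claimed: $2$ in (a), $3$ in (b) and (c), and $1$ in (d). In all four cases $A(\frakp)$ is, by the definitions recalled above, a form of a fixed split model over $R(\frakp)$ --- namely $\Mat_n(R(\frakp))$, the split octonion $\mathbb{O}_{R(\frakp)}$, the split Albert $H_3(\mathbb{O}_{R(\frakp)})$, and $R(\frakp)\times\dots\times R(\frakp)$ respectively. Whenever the residue field is infinite, Proposition~\ref{PR:generators-of-forms} will reduce the local generation question at $\frakp$ to the corresponding split model.

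First I would record the generation bounds for the split models over an arbitrary field $K$. For $\Mat_n(K)$: a cyclic shift matrix $X$ (satisfying $X^n=I$) together with the matrix unit $E_{11}$ generates the full algebra, since all matrix units arise as products of powers of $X$ with $E_{11}$ and the identity appears as $X^n$, yielding $2$-generation even non-unitally. For $\mathbb{O}_K$ and, when $2\in\units{K}$, for $H_3(\mathbb{O}_K)$, three generators suffice; these are classical facts extractable from the Cayley--Dickson construction and the standard presentation of $H_3$ by diagonal idempotents and off-diagonal units (cf.\ the references in~\cite{petersson}). For $K^n$ with $K$ infinite: choosing pairwise distinct nonzero $c_1,\dots,c_n\in K$ and setting $x=(c_1,\dots,c_n)$, a Vandermonde argument shows that $x,x^2,\dots,x^n$ span $K^n$, so $x$ alone generates $K^n$ as a non-unital algebra.

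Next I would handle the residue-field analysis in each case. For infinite $R(\frakp)$ and any of (a)--(d), Proposition~\ref{PR:generators-of-forms} transports the split-model bound to the arbitrary form $A(\frakp)$. For \emph{finite} $R(\frakp)$ in cases (a), (b), (c), I would use that every Azumaya, Cayley, or Albert algebra over a finite field is split: the Azumaya case is Wedderburn's theorem on finite division rings, and the analogous splitness statements for Cayley and Albert algebras over finite fields are classical (see again~\cite{petersson}), so $A(\frakp)$ is already isomorphic to the split model and the bound applies. Case (d) builds the infinite-residue-field hypothesis into the statement itself, which serves the dual purpose of enabling Proposition~\ref{PR:generators-of-forms} and of providing enough distinct nonzero $c_i$ for the Vandermonde step.

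With the local generation bounds verified at every $\frakp\in\Max R$, Theorem~\ref{thm.main} (or, if one prefers to count unital generators, its variant~(i)) delivers the claimed global bounds $d+2$, $d+3$, $d+3$, and $d+1$. The main obstacle I anticipate is the three-generation statement for the split octonion and split Albert algebras --- a concrete generating set has to be identified and its generation verified, even though the underlying facts are standard. Everything else is Wedderburn, Proposition~\ref{PR:generators-of-forms}, and routine bookkeeping.
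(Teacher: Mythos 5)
Your proposal is correct and follows the paper's overall strategy exactly: reduce to the field case via Theorem~\ref{thm.main}, establish the split-model bounds, transport them to arbitrary forms over infinite fields via Proposition~\ref{PR:generators-of-forms}, and dispose of finite residue fields by splitness. The one place where you genuinely diverge is part (b). You treat the Cayley case symmetrically with the others, which obliges you to (i) exhibit three explicit generators of the split octonion algebra $\mathbb{O}_K$ --- the step you yourself flag as the main remaining obstacle --- and (ii) prove that every octonion algebra over a finite field is split. The paper avoids both: it invokes the fact that \emph{every} Cayley algebra over a field arises from a quaternion algebra $Q$ by the Cayley--Dickson process, hence is generated by one element over $Q$, and $Q$ is generated by two elements by part (a); this gives three generators uniformly over all fields, with no appeal to Proposition~\ref{PR:generators-of-forms} and no finite-field case distinction. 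Your route is workable but leaves more to verify. Two smaller points: for (c) over finite fields, "classical" should be backed by something concrete --- the paper uses Serre's Conjecture~I (Steinberg) together with the classification of Albert algebras by $\mathrm{H}^1(F,G)$ for $G$ of type ${\sf F_4}$; and your insistence on distinct \emph{nonzero} entries in (d) is the right precision for the non-unital Vandermonde argument (the determinant acquires a factor $\prod_i c_i$), slightly sharper than the paper's phrasing.
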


We remind the reader that in the statement of the corollary, ``generated''
means ``generated as a non-unital algebra''; allowing  use of the unit element
in the proof does not improve the bounds.
Note that, in (d),  the assumption that $R/\frakp$ is infinite  is automatic
if $R$ contains an infinite field.

 \begin{proof}
 By Theorem~\ref{thm.main}, it is enough to prove the corollary
 when $R$ is a field $F$, in which case $d=0$.
 We let $A$ denote an $F$-algebra that is Azumaya (resp.\ Cayley, Albert, \'etale of rank $n$).

(a) 
First note that $\nMat{F}{n}$ is generated by the two matrices,
$E_{1,1}$ and $E_{1,2} + \dots + E_{n-1, n} + E_{n, 1}$. Here $E_{i,j}$
denotes the $n \times n$ matrix having $1$ in the
$(i, j)$-position and $0$ elsewhere.
Proposition~\ref{PR:generators-of-forms} now tells us
that when $F$ is infinite, any form of $\nMat{F}{n}$
is also generated by two elements.
When $F$ is a finite field, the only form
of $\nMat{F}{n}$ is $\nMat{F}{n}$ itself, by Wedderburn's theorem,
so we are done.

(b) By~\cite[\S III.4]{schafer}, a Cayley $F$-algebra $A$
is formed from a central simple $F$-algebra $Q$ of degree $2$ via
the Cayley--Dickson process. In particular, $A$ is generated by
one element over $Q$. As we saw in the proof of part (a),
$Q$ is generated by two elements over $F$. Hence, $A$ is generated
by three elements over $F$.

(c) A split Albert $F$-algebra
is generated by three elements; see \cite[p.~112]{mccrimmon}.
By Proposition~\ref{PR:generators-of-forms}, this is also the case for any Albert $F$-algebra when $F$ is infinite.
Thus we may assume that $F$ is finite. In this case
Serre's Conjecture I (proved by Steinberg) implies that
every Albert $F$-algebra is split.  Indeed, isomorphism classes
of Albert $F$-algebras are classified by the first Galois cohomology set
$\mathrm{H}^1(F, G)$, where $G$ is the split simply-connected
algebraic group of type ${\sf F_4}$ defined over
$F$~\cite[Proposition~37.11]{invbook}.
By Serre's Conjecture I, $\mathrm{H}^1(F, G)=0$ whenever $F$
has cohomological dimension $\leqslant 1$; see
\cite[Theorem~III.2.2.1]{Serre}. On the other hand,
finite fields are of cohomological dimension
$\leqslant 1$; see~\cite[Theorem~6.2.6, Proposition~6.2.3]{gs}.
This shows that $A$ is split, thus completing the proof of
part (c).

(d) We need to show that any \'etale $F$-algebra $A$ of rank $n$
over an infinite field $F$ is generated by a single element.
By Proposition~\ref{PR:generators-of-forms}, we may assume that
$A=F\times \dots\times F$ ($n$ times). In this case,
$A$ is generated by any element
$(\alpha_1, \dots, \alpha_n)$ with distinct entries.
\end{proof}

\begin{remark} \label{rem.etale}
In part (d), the assumption that $R/\frakp$ is infinite for any
$\frakp\in\Spec R$ cannot be removed in general.
Indeed,
when $R$ is a field $F$ with $q$ elements and $A=F\times \dots\times F$,  one needs at least $\lceil\log_q(n+1)\rceil$
generators,
since $x^q=x$ for any $x\in A$. In fact, it can be shown that any \'etale
$F$-algebra of rank $n$ can be generated by $\lceil\log_q(n+1)\rceil$ elements  (or $\lceil\log_qn\rceil$ if use of the unity is allowed).
Thus, if we drop the assumption that $R/\frakp$ is infinite
in Corollary~\ref{cor.main}(d),
we can still assert that $A$ is generated by $d+\lceil\log_q(n+1)\rceil$ elements, where $q=\min_{\frakp\in\Max R}|R/\frakp|$.
\end{remark}

\begin{remark}
Recall that a unital associative algebra
$A$ is called \emph{separable} if $A$ is projective
relative to the left $A\otimes_RA^\op$-module structure
given by $(a\otimes b^\op)x=axb$ ($a,b,x\in A$).
Examples of separable algebras include Azumaya
and finite \'etale algebras; see \cite{demeyer-ingraham} for further
details.
In the case where $\dim \, \Max(R) = d$ and
$R$ has no finite homomorphic images,
Corollary~\ref{cor.main}(a) can be generalized as follows:
{\em every finite separable $R$-algebra
can be generated by $d + 2$ elements.}
%
Indeed, by Theorem~\ref{thm.main} it suffices to show that
every separable algebra $B$ over an infinite field $F$
is generated by two elements. Let $K$ be an algebraic closure of $F$.
By  \cite[Corollary~2.4]{demeyer-ingraham}, $B\otimes_FK$ is a
product of matrix algebras $\Mat_{d_1}(K) \times \dots \times \Mat_{d_r}(K)$. By Proposition~\ref{PR:generators-of-forms}
we may assume $B$ itself is a product of matrix algebras
$\Mat_{d_1}(F) \times \dots \times \Mat_{d_r}(F)$.
In this case a proof can be found in~\cite[Proposition 2.10]{reichstein}.
\end{remark}

\noindent
{\bf Acknowledgement.}
We are grateful to Thomas R\"ud and the anonymous referee for their
help with the exposition.

\bibliographystyle{plain}


\end{document}